\numberwithin{equation}{section}
\theoremstyle{plain}
\newtheorem{Th}{Theorem}[section]
\newtheorem{Prop}[Th]{Proposition}
\newtheorem*{theorem*}{Theorem}
 \theoremstyle{definition}
\newtheorem{Rem}[Th]{Remark}
\newtheorem{?}[Th]{Problem}
\newtheorem{Ex}[Th]{Example}
\begin{document}

\title[Partition Inequalities]{Partition Inequalities and Applications to Sum-Product Conjectures of Kanade-Russell}

\author[W. Bridges]{Walter Bridges}

\address{Louisiana State University \\ Department of Mathematics }

\email{wbridg6@lsu.edu}

 \subjclass[2019]{05A17 (05A20 11P81)}

 \keywords{partition inequalities, Ehrenpreis' problem, anti-telescoping method}

\begin{abstract} We consider differences of one- and two-variable finite products and provide combinatorial proofs of the nonnegativity of certain coefficients.  Since the products may be interpreted as generating functions for certain integer partitions, this amounts to showing a partition inequality.  This extends results due to Berkovich-Garvan and McLaughlin.  We then apply the first inequality and Andrews' Anti-telescoping Method to give a solution to an ``Ehrenpreis Problem'' for recently conjectured sum-product identities of Kanade-Russell.  That is, we provide some evidence for Kanade-Russell's conjectures by showing nonnegativity of coefficients in differences of product-sides as Andrews-Baxter and Kadell did for the product sides of the Rogers-Ramanujan identities.
\end{abstract}

\maketitle

\section{Introduction}

A {\it partition} $\lambda$ of an integer $n$ is a multi-set of positive integers $\{\lambda_1, \dots, \lambda_{\ell} \}$, whose {\it parts} satisfy $$\lambda_1 \geq \lambda_2 \geq \dots \geq \lambda_{\ell} \geq 1 \qquad \text{and} \qquad \sum_{j=1}^{\ell} \lambda_j = n.$$
We will often use {\it frequency notation} to refer to a partition, where $\left(r^{m_r}, \dots, 2^{m_2},1^{m_1} \right)$ represents the partition in which the part $i$ occurs $m_i$ times for $1 \leq i \leq r.$  

Visually, a partition $\lambda$ may be represented by its {\it Ferrer's diagram}, in which parts are displayed as rows of dots.  For example, the Ferrer's diagram of the partition $(5,3,2^2)$ is the array below. $$\begin{matrix} \bullet & \bullet & \bullet & \bullet & \bullet \\ \bullet & \bullet & \bullet & \\ \bullet & \bullet & \\ \bullet & \bullet &  \end{matrix}$$

For two $q$-series $f(q)= \sum_{n \geq 0} a_nq^n$ and $g(q)= \sum_{n \geq 0} b_n q^n$, we write $f(q) \succeq g(q)$ if $a_n \geq b_n$ for all $n$.  If $f(q) \succeq 0$, we will say that $f$ is a {\it nonnegative series}.

We will use the standard $q$-Pochhammer symbol, \begin{align*} &(a;q)_n := \prod_{j=0}^{n-1} \left(1-aq^j \right), \quad (a;q)_{\infty} := \lim_{n \to \infty} (a;q)_n, \quad \text{and} \\ &(a_1, \dots, a_r;q)_n := (a_1;q)_n \cdots (a_r;q)_n. \end{align*}  By convention, an empty product equals 1.

The study of the type of partition inequality we consider began at the 1987 A.M.S. Institute on Theta Functions with a question of Leon Ehrenpreis about the Rogers-Ramanujan Identities (\cite{AP}, Cor. 7.6 and Cor. 7.7):
$$\mathcal{RR}_1: \quad \sum_{n \geq 0} \frac{q^{n^2}}{(q;q)_n} = \frac{1}{(q,q^4;q^5)_{\infty}},$$ $$ \mathcal{RR}_2: \quad \sum_{n \geq 0} \frac{q^{n^2+n}}{(q;q)_n}= \frac{1}{(q^2,q^3;q^5)_{\infty}}.$$
The identity  $\mathcal{RR}_1$ may be interpreted as an equality of certain partition generating functions, giving that the number of partitions of $n$ such that the gap between successive parts is at least 2 equals the number of partitions of $n$ into parts congruent to $\pm 1 \pmod{5}$.  Similarly, $\mathcal{RR}_2$ gives that the number of partitions of $n$ such that the gap between successive parts is at least 2 and 1 does not occur as a part equals the number of partitions of $n$ into parts congruent to $\pm 2 \pmod{5}$.  Thus, both combinatorially and algebraically, it is easy to see that $$ \sum_{n \geq 0} \frac{q^{n^2}}{(q;q)_n}-\sum_{n \geq 0} \frac{q^{n^2+n}}{(q;q)_n} \succeq 0.$$  Therefore, it also holds that \begin{equation}\label{E:RR} \frac{1}{(q,q^4;q^5)_{\infty}} - \frac{1}{(q^2,q^3;q^5)_{\infty}}  \succeq 0. \end{equation}  Ehrenpreis' Problem was to provide a proof of \eqref{E:RR} that did not reference the (heavy-handed and quite nontrivial) Rogers-Ramanujan identities.

Solutions to Ehrenpreis' Problem have been given in various ways.  In the course of proving \eqref{E:RR}, Andrews-Baxter \cite{AB} were led to a new ``motivated'' proof of the Rogers-Ramanujan Identities themselves.  A direct combinatorial proof of \eqref{E:RR} was provided by Kadell \cite{K}, who constructed an injection from the set of partitions of $n$ with parts congruent to $\pm 2 \pmod{5}$ to those with parts congruent to $\pm 1 \pmod{5}$.  Later, Andrews developed the Anti-telescoping Method for showing positivity in differences of products like \eqref{E:RR} \cite{A}.  This method was used by Berkovich-Grizzell in \cite{BG15} to prove infinite classes of partition inequalities, such as the following.

\begin{theorem*}[Theorem 1.2 of \cite{BG15}]
For any octuple of positive integers $(L,m,x,y,z,r,s,u)$, $$\frac{1}{(q^x,q^y,q^z,q^{rx+sy+uz};q^m)_L} - \frac{1}{(q^{rx},q^{sy},q^{uz},q^{x+y+z};q^m)_L} \succeq 0.$$
\end{theorem*}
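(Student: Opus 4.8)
The plan is to prove this via a combinatorial injection, which is the most natural approach given the partition-theoretic interpretation of these products. The quantity $\frac{1}{(q^{a_1},\dots,q^{a_4};q^m)_L}$ is the generating function for partitions into parts drawn from the finite set $\{a_i + jm : 1 \le i \le 4,\ 0 \le j \le L-1\}$, where parts may repeat arbitrarily (since each factor $\frac{1}{1-q^{a_i+jm}}$ contributes a geometric series). So I would let $A = \{rx, sy, uz, x+y+z\}$ generate the ``small'' product and $B = \{x, y, z, rx+sy+uz\}$ generate the ``large'' product, each shifted by $jm$ for $0 \le j \le L-1$. The goal is then to construct an injection $\Phi$ from partitions with parts in the $A$-set into partitions with parts in the $B$-set that preserves the integer being partitioned.

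First I would set up the key arithmetic identities relating the two part-sets. The crucial observation is that $rx + sy + uz = r\cdot x + s \cdot y + u \cdot z$ and $x + y + z$, together with the original generators $x,y,z$, allow one to trade a ``large'' part for a combination of smaller ones at each residue level $jm$. Concretely, at a fixed shift level $j$, a part $rx + jm$ on the $A$-side can be rewritten as $r$ copies of $x$ plus adjustments, and similarly for $sy+jm$ and $uz+jm$; dually, the composite part $x+y+z+jm$ on the $B$-side absorbs one each of $x, y, z$. The plan is to exploit these relations to move ``mass'' from the repeated-small-part generators on the $A$-side into the available parts on the $B$-side, handling the shift parameter $m$ so that congruence classes mod $m$ are respected and the map stays within the allowed window $0 \le j \le L-1$.

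The key steps, in order, would be: (1) reduce to understanding how a single $A$-part decomposes into $B$-parts summing to the same value, treating the four generators separately; (2) organize an $A$-partition by how many times each generator (at each level) is used, producing a bookkeeping of multiplicities; (3) define $\Phi$ by replacing each block of $A$-parts with a canonically chosen block of $B$-parts of equal total size, choosing the replacement greedily from the largest available $B$-part downward to guarantee injectivity; and (4) verify that $\Phi$ never requires a $B$-part outside the window $1 \le$ level $\le L$, which is where the matching upper limits $L$ on both products become essential. Injectivity would follow by showing $\Phi$ is reversible on its image: from a $B$-partition in the image one should be able to read off the original generator-multiplicities uniquely.

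The hard part will be controlling the window constraint in step (4) and simultaneously guaranteeing injectivity in step (3). Unlike an infinite-product inequality, here both sides are truncated at $L$, so any decomposition that pushes a part's ``level index'' above $L-1$ is illegal; the injection must be designed so that trading a large generator such as $rx+sy+uz$ for the smaller generators $x,y,z$ (or vice versa) never overflows this bound. I expect this is exactly the point where Andrews' Anti-telescoping Method enters: rather than building one global injection, one telescopes the difference over $L$, writing the product as a nested sequence and showing each incremental difference is nonnegative by a \emph{local} injection at a single level, where the window constraint is automatic. So my actual recommended route is to anti-telescope in $L$, isolate the innermost factors, and reduce the whole statement to a base case ($L=1$) combined with a nonnegativity-preserving step, at which point the core combinatorial lemma concerns only a product of the form $\frac{1}{(q^{a_1},\dots,q^{a_4};q^m)_1}$ versus its counterpart — a finitely-checkable or directly-injectable identity.
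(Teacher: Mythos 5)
This statement is quoted in the paper as Theorem 1.2 of Berkovich--Grizzell \cite{BG15}; the paper itself offers no proof of it (it only remarks that \cite{BG15} established it), so there is no in-paper argument to compare against. Judged on its own terms, your proposal is an outline rather than a proof, and it has a concrete gap at its core: the injection is never actually defined, and the ``natural'' part-by-part decomposition you gesture at fails to be injective. Take $r=s=u=2$ and $L=1$. The obvious replacement rules send $rx\mapsto x^{r}$, $sy\mapsto y^{s}$, $uz\mapsto z^{u}$, and $(x+y+z)\mapsto(x,y,z)$; then the source partition consisting of one part each of $2x,2y,2z$ and the source partition consisting of two parts $x+y+z$ are \emph{both} sent to $\bigl(x^{2},y^{2},z^{2}\bigr)$, and both partition the same integer $2(x+y+z)$. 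This is exactly the collision that makes the theorem nontrivial, and neither your ``greedy from the largest $B$-part downward'' prescription nor your step (2) bookkeeping explains how to resolve it: given a target partition one must be able to decide, from the multiplicities $\nu_x,\nu_y,\nu_z$ and the number of composite parts, which of the (in general underdetermined: three linear constraints, four unknowns) preimages was intended. Any correct proof must single out a canonical representative and prove reversibility through this ambiguity; that is the entire content of the theorem and it is absent from the proposal.

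Your closing pivot to anti-telescoping does not rescue the argument as stated. First, the base case $L=1$ is not ``finitely-checkable'': it asserts a coefficient inequality for every $n$, namely that partitions of $n$ into parts from $\{rx,sy,uz,x+y+z\}$ inject into partitions of $n$ into parts from $\{x,y,z,rx+sy+uz\}$, and this is precisely where the collision above lives. Second, anti-telescoping a four-generator product produces $j$-th terms whose numerators are differences of four-term symmetric functions of $q^{x+jm},\dots$, and the nonnegativity of each such term is not automatic (contrast the two-generator case in Section 3 of this paper, where the numerator conveniently factors as $q^4(1-q^3)(1-q^4)$ with both factors cancelling into the denominator). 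You would need to exhibit the analogous factorization or a local injection for each telescoped term, and you have not done so. The window constraint in your step (4), which you correctly identify as delicate, is in fact the easy part (one checks $\sum j_i=j\le L-1$ forces each $j_i\le L-1$); the genuinely hard part is the injectivity you have deferred.
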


In \cite{BG05}, Berkovich-Garvan generalized \eqref{E:RR} to an arbitrary modulus as follows.

\begin{theorem*}[Theorem 5.3 of \cite{BG05}] Suppose $L \geq 1$ and $1 \leq r < \frac{M}{2}$.  Then $$\frac{1}{(q,q^{M-1};q^M)_L} - \frac{1}{(q^r,q^{M-r};q^M)_L} \succeq 0$$ if and only if $r \nmid (M-r)$.
\end{theorem*}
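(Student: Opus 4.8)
The plan is to pass to the combinatorial interpretation and to treat the two implications separately, after recording a convenient reformulation of the hypothesis. Since $M=(M-r)+r$, we have $r\mid(M-r)$ if and only if $r\mid M$, so the condition ``$r\nmid(M-r)$'' is the same as ``$r\nmid M$.'' Expanding the products, $\frac{1}{(q,q^{M-1};q^M)_L}$ is the generating function for partitions whose parts lie in $\mathcal{P}_1=\{1+jM,\,(M-1)+jM:0\le j\le L-1\}$ (unbounded multiplicities), while $\frac{1}{(q^r,q^{M-r};q^M)_L}$ is the analogous generating function for parts in $\mathcal{P}_r=\{r+jM,\,(M-r)+jM:0\le j\le L-1\}$. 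Thus the asserted $\succeq 0$ is equivalent to the statement that for every $n$ the number of partitions of $n$ with parts in $\mathcal{P}_1$ is at least the number with parts in $\mathcal{P}_r$.

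For the \emph{only if} direction I would exhibit a negative coefficient when $r\mid M$ (taking $r\ge 2$; for $r=1$ the two products coincide and the difference is identically $0$, the one degenerate case). If $r\mid M$ then every element of $\mathcal{P}_r$ is divisible by $r$, and I claim the coefficient of $q^{M-r}$ already fails. On the $\mathcal{P}_1$ side, $M-r<M-1$ forces the only usable part to be $1$, so the coefficient is exactly $1$; on the $\mathcal{P}_r$ side both the single part $M-r$ and the partition $r^{(M-r)/r}$ into copies of $r$ contribute, giving at least $2$. Hence the coefficient of $q^{M-r}$ in the difference is negative.

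For the \emph{if} direction the natural engine is Andrews' Anti-telescoping Method, and I would first isolate the clean base case $L=1$, namely the two-term inequality $\frac{1}{(1-q)(1-q^{M-1})}\succeq\frac{1}{(1-q^r)(1-q^{M-r})}$. Here the coefficient of $q^n$ on the right is $P(n):=\#\{(i,j)\ge 0:ri+(M-r)j=n\}$ and on the left it equals $\lfloor n/(M-1)\rfloor+1$. The solutions $(i,j)$ lie in a single arithmetic progression along the segment $ri+(M-r)j=n$, so $P(n)\le\lfloor n/\mathrm{lcm}(r,M-r)\rfloor+1$, and the base case reduces to the elementary inequality $\mathrm{lcm}(r,M-r)\ge M-1$. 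Writing $g=\gcd(r,M)$, $r=g\rho$, $M=g\mu$ with $\gcd(\rho,\mu)=1$, the condition $r\nmid M$ is exactly $\rho\ge 2$; using $r<M/2$ (so $\mu\ge 2\rho+1$), the inequality $\mathrm{lcm}(r,M-r)=g\rho(\mu-\rho)\ge g\mu-1$ reduces to $g\rho(\rho-1)\ge g-1$, which is clear since $g\rho(\rho-1)\ge 2g$.

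The main obstacle is upgrading from $L=1$ to general $L$. The tempting move---pairing the level-$j$ factors of the two products and invoking the base case on each---fails, because the shifted two-term inequality $\frac{1}{(1-q^{1+jM})(1-q^{(M-1)+jM})}\succeq\frac{1}{(1-q^{r+jM})(1-q^{(M-r)+jM})}$ is \emph{false} for every $j\ge 1$: the coefficient of $q^{r+jM}$ is $0$ on the left and $1$ on the right. Similarly, the one-factor-at-a-time telescoping does not yield termwise-nonnegative summands (a direct check with $M=5$, $r=2$, $L=3$ produces a negative intermediate coefficient at $q^8$). The real content is therefore a global argument in which the abundance of small parts on the $\mathcal{P}_1$ side (above all the part $1$) compensates for the higher levels. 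Concretely, I would aim either to construct a single sum-preserving injection from $\mathcal{P}_r$-partitions into $\mathcal{P}_1$-partitions refining the $L=1$ counting bijection, or, staying within anti-telescoping, to choose the order of factor replacement so that each difference splits as a manifestly nonnegative series times a genuinely valid base-case inequality. Making one of these two strategies absorb the interleaving of levels is where essentially all the difficulty lies.
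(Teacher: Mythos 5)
Your proposal has a genuine gap, and you have located it yourself: the ``if'' direction for general $L$ is never proved. This theorem is quoted from Berkovich--Garvan, and the paper's own machinery for it is the proof of Theorem \ref{T:BG} (take $a=r$, $b=M-r$, $c=M-1$, so $1+c=a+b$ and $r\nmid(M-r)$ becomes $a\nmid b$); that proof closes exactly the step you leave open. The mechanism is the first of the two strategies you name: a single sum-preserving injection, built by splitting each partition into the triple $\bigl(\lambda_{(a)},\lambda_{(b)},(a^{k},b^{\ell})\bigr)$, where $\lambda_{(a)},\lambda_{(b)}$ are the $M$-modular diagrams left after stripping the $a$-ends and $b$-ends, applying an $L=1$ injection $\varphi_1$ to the multiset of ends alone, and reattaching the resulting $1$-ends and $c$-ends to the two diagrams. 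The point you should take from this is that your $L=1$ argument, while correct, is a pure counting argument (comparing $\lfloor n/(M-1)\rfloor+1$ with $\lfloor n/\operatorname{lcm}(r,M-r)\rfloor+1$) and therefore produces no map; the lift to general $L$ needs an explicit $\varphi_1$ whose output is guaranteed to contain at least $\#\lambda_{(a)}$ and $\#\lambda_{(b)}$ ends of the right residues in every case, and arranging that forces the three-case definition (with the divisibility conditions involving $a/\gcd(a,b)$) that constitutes the real work of the paper's proof. So the ``essentially all the difficulty'' you defer is not a routine absorption of levels; it is the theorem.

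Your other steps are sound and match the paper where they overlap: the reformulation $r\mid(M-r)\Leftrightarrow r\mid M$, the negative coefficient at $q^{M-r}$ for the ``only if'' direction (this is exactly the paper's Remark 2.2 example with $M=6$, $r=2$, coefficient of $q^{4}$ equal to $-1$), the caveat about $r=1$, and your observations that the levelwise pairing and naive one-factor anti-telescoping both fail are all correct and worth keeping. But as written the proposal proves only the easy half plus the $L=1$ base case of the hard half.
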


One can apply Berkovich-Garvan's result to solve similar ``Ehrenpreis Problems'' for Kanade-Russell's ``mod 9 identities'' in \cite{KR14}.

The first result in this paper extends the above in a way that is independent of the modulus.  We will use this in Section 3 to solve an ``Ehrenpreis Problem'' for conjectural product-sum identities of Kanade-Russell in \cite{KR18}.

\begin{Th}\label{T:BG}
Let $a,b,c$ and $M$ be integers satisfying $1 < a < b < c$ and $1+c=a+b$.
Then if $a \nmid b$, $$\frac{1}{(q,q^c;q^M)_L} - \frac{1}{(q^a,q^b;q^M)_L} \succeq 0 \qquad \text{for any $L \geq 0.$}$$
\end{Th}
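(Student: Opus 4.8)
The plan is to prove the inequality combinatorially, by an injection. Writing $1/(a;q^M)_L = \prod_{j=0}^{L-1}(1-aq^{jM})^{-1}$, the left-hand product is the generating function for partitions whose parts are drawn (with repetition) from the multiset $\mathcal{A}_1 = \{1+jM,\; c+jM : 0 \leq j \leq L-1\}$, while the right-hand product counts partitions into parts from $\mathcal{A}_2 = \{a+jM,\; b+jM : 0 \leq j \leq L-1\}$. So it suffices to construct, for each $n$, an injection $\iota \colon \mathcal{P}(\mathcal{A}_2,n) \hookrightarrow \mathcal{P}(\mathcal{A}_1,n)$ of the partitions of $n$ with parts in $\mathcal{A}_2$ into those with parts in $\mathcal{A}_1$. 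The hypothesis $1+c=a+b$ supplies the fundamental move: at each fixed level $j$ one has $(a+jM)+(b+jM)=(1+jM)+(c+jM)$, so a pair consisting of one part $a+jM$ and one part $b+jM$ may be replaced by the pair $1+jM,\,c+jM$ without changing the total. I would first apply this move greedily at every level, converting $\min(\alpha_j,\beta_j)$ such pairs, where $\alpha_j,\beta_j$ are the multiplicities of $a+jM$ and $b+jM$. This reduces matters to partitions in which, at each level, only one of the two residues survives as an ``excess''.

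The heart of the matter, and the only place the arithmetic condition $a\nmid b$ can enter, is the single-level atom: the claim that $\frac{1}{(1-q)(1-q^c)}\succeq\frac{1}{(1-q^a)(1-q^b)}$, that is, $\lfloor n/c\rfloor+1\ge \#\{(s,t)\in\mathbb{Z}_{\ge 0}^2 : as+bt=n\}$ for all $n$. Here I would first record the factorisation $q+q^c-q^a-q^b = q(1-q^{a-1})(1-q^{b-1})$, which follows from $c=a+b-1$ and exhibits the numerator of the difference, showing that positivity is really a statement about the number of lattice points on the segment $as+bt=n$. Setting $d=\gcd(a,b)$, consecutive solutions differ by $(s,t)\mapsto(s+b/d,\,t-a/d)$, so there are $\tfrac{nd}{ab}+O(1)$ of them; since $a\nmid b$ forces $d$ to be a proper divisor of $a$, hence $d\le a/2$, one obtains $ab>d(a+b-1)=dc$, so the solution count grows strictly more slowly than $n/c$. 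I would then upgrade this asymptotic comparison to an inequality valid for every $n$ by encoding each representation $(s,t)$ through the target parameter $v=\#\{\text{parts equal to } c\}\in\{0,\dots,\lfloor n/c\rfloor\}$: pairing $\min(s,t)$ of the parts and recording the surplus produces the map, and $a\nmid b$ is exactly what keeps the surplus $a$-parts from overflowing the available range. (When $a\mid b$ the count fails already at $n$ just below $2b$, which is the source of the known counterexamples.)

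The remaining, and hardest, step is to lift the atom to arbitrary $L$ and arbitrary modulus $M$ while preserving injectivity. I would stress that the naive anti-telescoping — peeling off one level $j$ at a time and hoping each difference $P_k\bigl(\frac{1}{(1-q^{1+jM})(1-q^{c+jM})}-\frac{1}{(1-q^{a+jM})(1-q^{b+jM})}\bigr)$ is nonnegative — does \emph{not} succeed: the single factor already carries a negative coefficient at $q^{a+jM}$, and the common factor $P_k$ has no part of size $a-1$ to cancel it, so an individual telescoped piece can be negative (this is visible already for $a=2,\,b=3,\,c=4,\,M=5,\,L=2$, where such a piece has coefficient $-1$ at $q^8$, even though the full difference is nonnegative). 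Positivity is therefore genuinely global. I would instead run an induction on $L$ carried by a two-variable refinement that tracks the multiplicity of parts at the top level $j=L-1$, so that the flexibility of the lower levels — in particular the small parts $1+jM$ manufactured by the pairing move — can absorb the leftover single-residue excess. The main obstacle is precisely the bookkeeping that guarantees the excess parts coming from different levels are routed to distinct images in $\mathcal{P}(\mathcal{A}_1,n)$; controlling these potential collisions uniformly in $M$ is where the real work lies, and it is where the hypothesis $a\nmid b$ must be invoked a second time.
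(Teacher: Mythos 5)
Your outline has the right shape---an injection built from the pairing move $(a,b)\mapsto(1,c)$, with the excess single-residue parts and the condition $a\nmid b$ carrying the difficulty---but both of the steps where the actual content lives are left unexecuted, and one of them is argued by a method that cannot work. For the single-level atom, the comparison of $\lfloor n/c\rfloor+1$ with $\tfrac{nd}{ab}+O(1)$ cannot be ``upgraded'' to an inequality for every $n$, because the inequality is \emph{tight}: for $(a,b,c)=(4,6,9)$ both sides equal $2$ at $n=12$ and equal $3$ at $n=24$, so no amount of asymptotic slack survives and the uncontrolled $O(1)$ terms are fatal. The paper instead gives an explicit three-case map: pairs $(a,b)\mapsto(1,c)$, excess $a$'s or $b$'s are broken into $1$'s, \emph{except} that when the number $\ell-k$ of excess $b$'s is a positive multiple of $a/d$ (with $d=\gcd(a,b)$) the last two excess $b$'s are sent to $(1^{b-a+1},c)$ instead. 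That exception exists because the naive map is not injective---the image $(1^{k+b(\ell-k)},c^k)$ can coincide with a Case-1 image $(1^{\ell'+a(k'-\ell')},c^{\ell'})$ exactly when $a/d$ divides $\ell-k$---and $a\nmid b$ is what guarantees $\ell-k\ge 2$ so the fix is available, and what separates the three cases by congruences on $\nu_1-\nu_c$. Your sketch contains no collision analysis at all and attributes the role of $a\nmid b$ to preventing an ``overflow,'' which is not where it enters.

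The lift to general $L$ and $M$ is likewise only a plan, and your proposed route (level-by-level pairing, then an induction on $L$ with a two-variable refinement) heads straight into the cross-level collision bookkeeping that you yourself flag as unresolved. The paper avoids this entirely: strip every part of the form $Mj+a$ or $Mj+b$ down to its $M$-modular diagram, pooling \emph{all} the $a$-ends and $b$-ends from every level into one multiset $(a^k,b^{\ell})$; apply the $L=1$ injection $\varphi_1$ to that multiset; then reattach the resulting $1$-ends and $c$-ends to the two diagrams. The only thing to check is that $\varphi_1$ produces enough ends of each kind to cap both diagrams (which follows from $b>1$ in Case 2 and from $\ell-k>1$ in Case 3), and injectivity is inherited from $\varphi_1$ because the decomposition into diagrams plus ends is reversible. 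Without either the corrected single-level injection or a concrete mechanism for the lift, the proposal does not yet constitute a proof.
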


Note that we do not necessarily assume $a,b,c \leq M$.  Translated into a partition inequality, Theorem 1.1 says that there are more partitions of $n$ into parts of the forms $Mj+1$ and $Mj+c$ than there are partitions of $n$ into parts of the forms $Mj+a$ and $Mj+b$, where throughout $1 \leq j \leq L$.

Partition inequalities with a fixed number of parts were considered by McLaughlin in \cite{M}.  Answering two of McLaughlin's questions, we give combinatorial proofs of finite analogues of Theorems 7 and 8 from \cite{M}.

\begin{Th}
\label{T:M1}
Let $a,b$ and $M$ be integers satisfying $1 \leq a < b < \frac{M}{2}$ and $\gcd(b,M)=1$.  Define $c(m,n)$ by $$\frac{1}{(zq^a,zq^{M-a};q^M)_{L}(1-q^{LM+a})} - \frac{1}{(zq^b,zq^{M-b};q^M)_{L}} =: \sum_{m,n \geq 0} c(m,n)z^mq^n.$$  Then for any $L,n\geq 0$, we have $c(m,nM) \geq 0$.  If in addition $M$ is even and $a$ is odd, then we also have $c\left(m,nM+\frac{M}{2}\right) \geq 0$ for every $n \geq 0$.
\end{Th}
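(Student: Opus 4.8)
The plan is to give a combinatorial proof by interpreting both product-sides as generating functions for partitions with the number of parts recorded by $z$, and then constructing an explicit injection on the relevant residue classes of the size. Expanding $1/(zq^a,zq^{M-a};q^M)_L$, the coefficient of $z^mq^n$ counts partitions of $n$ into exactly $m$ parts drawn from the set $\{a+iM: 0\le i\le L-1\}\cup\{(j+1)M-a: 0\le j\le L-1\}$; the extra factor $1/(1-q^{LM+a})$ on the left side appends arbitrarily many copies of the single part $LM+a$, which contribute to $n$ but not to the $z$-degree $m$. Thus $c(m,n)\ge 0$ is equivalent to the existence of a map that is injective and preserves both $n$ and $m$ from the ``$b$-partitions'' (parts $\equiv\pm b\pmod M$ in the allowed range) to the ``$a$-partitions'' (parts $\equiv\pm a\pmod M$ in the allowed range, together with the free part $LM+a$). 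I will build such an injection and verify that its image lands in the left-hand set precisely when $n\equiv 0$ or $n\equiv M/2\pmod M$.

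First I would record the arithmetic that drives everything: a part $b+iM$ (``$+$ part'', level $i$) and a part $(j+1)M-b$ (``$-$ part'', level $j$) sum to $(i+j+1)M$, a multiple of $M$ independent of whether the sign-value is $a$ or $b$. Consequently, if a $b$-partition has $p$ parts of $+$ type and $q$ of $-$ type, its size satisfies $n\equiv (p-q)b\pmod M$; since $\gcd(b,M)=1$, the slice $n\equiv 0\pmod M$ forces $p\equiv q\pmod M$. On the subset where $p=q$ the injection is transparent: relabel each level-$i$ part $b+iM\mapsto a+iM$ and each level-$j$ part $(j+1)M-b\mapsto (j+1)M-a$. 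Each $+$ part shrinks by $b-a$ and each $-$ part grows by $b-a$, so with $p=q$ the size is preserved, the level bounds and the part count $p+q=m$ are preserved, and the map is a bijection onto the $a$-partitions with equally many $+$ and $-$ parts (its inverse relabels $a\mapsto b$). Note that this part of the argument uses neither the free part nor the residue hypothesis beyond $p=q$.

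The content is therefore the case $p\ne q$, where $p-q=kM$ with $k\ne 0$: after pairing off and relabeling $\min(p,q)$ of each sign as above, one is left with $|k|M$ excess parts of a single sign that must be converted, size-for-size and keeping the part count fixed, into admissible $a$-parts. Here the naive relabeling changes the size by a nonzero multiple of $M$, namely $\pm kM(b-a)$, and this discrepancy must be reabsorbed by raising or lowering levels — equivalently, moving parts within a residue class by multiples of $M$ — while respecting the level ceiling $L-1$; the single free part $LM+a$ supplies exactly the top-level overflow room needed when the levels would otherwise exceed $L-1$, and the hypothesis $a<b<M/2$ keeps $a$ the smallest available part so that the low end is also controlled. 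Making this conversion injective (i.e.\ choosing a canonical normal form for the excess block that can be read back off the image) is the \emph{main obstacle}, and I expect it to require a careful greedy assignment of levels together with a bookkeeping of how many free parts are used. Finally, for the second statement one repeats the residue analysis with $n\equiv M/2\pmod M$: since $M$ is even, $\gcd(b,M)=1$ makes $b$ odd, whence $p-q\equiv M/2\pmod M$ is always nonzero, and the hypothesis that $a$ is odd (so that $\gcd(a,M)$ is odd and hence divides $M/2$) is exactly what guarantees the converted $a$-side can realize the residue $M/2$; the same excess-conversion then applies verbatim.
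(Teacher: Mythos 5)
Your setup matches the paper's up to the point where the real work begins: you interpret both sides as generating functions for partitions into parts $\equiv\pm a$ (resp.\ $\pm b$) $\pmod M$ with the part count tracked by $z$, you use $\gcd(b,M)=1$ to deduce from $n\equiv 0\pmod M$ that $\nu_b-\nu_{M-b}\equiv 0\pmod M$, and you dispose of the balanced case $\nu_b=\nu_{M-b}$ by relabeling ends $b\mapsto a$, $M-b\mapsto M-a$. But the theorem's entire content is the unbalanced case, and there you explicitly defer the construction (``making this conversion injective \dots is the \emph{main obstacle}''). That is a genuine gap, not a missing detail: without a concrete rule for reabsorbing the size discrepancy $\pm(b-a)(m-2k)$ and a way to tell the resulting images apart, nothing beyond the trivial slice has been proved.

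The paper's resolution, which your greedy-relabeling sketch does not contain, is to split each partition as a triple $\left(\lambda_{(b)},\lambda_{(M-b)},(b^{\nu_b},(M-b)^{\nu_{M-b}})\right)$ of two $M$-modular diagrams (rows of $M$'s) plus their ends, set $y=(b-a)(m-2k)/M$ and $z=(M-b-a)(m-2k)/M$ (both positive integers with $y,z<m-k$), and then: when the excess is in the $b$'s, attach a column of $y$ $M$'s to $\lambda_{(b)}$ and switch all ends to $a$, $M-a$; when the excess is in the $(M-b)$'s, either attach a column of $z$ $M$'s to $\lambda_{(M-b)}$ and swap which diagram receives the $a$-ends (so the excess flips to the $a$ side), or remove a column of $y$ $M$'s from $\lambda_{(M-b)}$ and keep the orientation --- the choice being governed by whether $\lambda_{(M-b)}$ contains a column of length $y$. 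It is exactly this dichotomy, which can be read off again from the image, that makes the map injective; your proposal has no analogue of it. Two further points you would need to repair: attaching a column adds at most $M$ to the largest part, which can push it to $LM+a$, and this is the precise reason the factor $1/(1-q^{LM+a})$ appears on the left; and for the slice $n\equiv M/2\pmod M$ the hypothesis that $a$ is odd is used not to ``realize the residue'' but to keep $y$ and $z$ integers when $m-2k\equiv M/2\pmod M$ (one needs $b-a$ and $M-b-a$ to be even).
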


Note that we do not necessarily make the assumption $\gcd(a,M)=1$ that is in \cite{M}.  While these partition inequalities hold only for $n$ in certain residue classes $\pmod{M}$, Theorem \ref{T:M1} is a strengthening of Theorem \ref{T:BG} for these $n$.  The following is a distinct parts analogue.

\begin{Th}\label{T:M2}
Let $a,b$ and $M$ be integers satisfying $1 \leq a < b < \frac{M}{2}$ and $\gcd(b,M)=1$.  Define $c(m,n)$ by $$(-zq^a,-zq^{M-a};q^M)_{L}\left(1+zq^{LM+a}\right) - (-zq^b,-zq^{M-b};q^M)_{L} =: \sum_{m,n \geq 0} d(m,n)z^mq^n.$$  Then for any $L,n\geq 0$, we have $d(m,nM) \geq 0$.  If in addition $M$ is even and $a$ is odd, then we also have $d\left(m,nM+\frac{M}{2}\right) \geq 0$ for every $n \geq 0$.
\end{Th}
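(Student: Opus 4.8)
The plan is to give a combinatorial injection, paralleling the argument for Theorem~\ref{T:M1}. First I would read off the combinatorial meaning of the two products. The product $(-zq^a,-zq^{M-a};q^M)_L(1+zq^{LM+a})$ is the generating function, with $z$ marking the number of parts and $q$ marking the size, for partitions into \emph{distinct} parts drawn from the ``$a$-list'' $\{a+jM : 0\le j\le L\}\cup\{(j+1)M-a : 0\le j\le L-1\}$, i.e.\ parts $\equiv \pm a \pmod M$ arranged in $L+1$ ``$+$-slots'' and $L$ ``$-$-slots''; likewise $(-zq^b,-zq^{M-b};q^M)_L$ generates distinct partitions from the ``$b$-list'' with $L$ slots of each sign. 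Thus $d(m,N)$ is the number of distinct $a$-partitions of $N$ into $m$ parts minus the number of distinct $b$-partitions of $N$ into $m$ parts, and it suffices to build, for each target $N\equiv 0$ or $\tfrac M2 \pmod M$, an injection $\phi$ from $b$-partitions into $a$-partitions that preserves both the size $N$ and the number of parts $m$.

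Next I would pin down the residues. Grouping each list into columns $\{b+jM,\,(j+1)M-b\}$, a $b$-partition with $p$ parts of sign $+$ and $q$ of sign $-$ has size $\equiv (p-q)b \pmod M$. Since $\gcd(b,M)=1$, the target $N\equiv 0$ forces $p\equiv q \pmod M$, and (when $M$ is even, so $b$ is odd) the target $N\equiv \tfrac M2$ forces $p-q\equiv \tfrac M2 \pmod M$. So the excess $p-q$ is a multiple of $M$ in the first case and lies in $\tfrac M2 + M\mathbb{Z}$ in the second.

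The heart of $\phi$ is a shift by $\delta:=b-a>0$: send each $+$ part $b+jM$ to the $a$-part $a+jM$ and each $-$ part $(j+1)M-b$ to $(j+1)M-a$. This lands in the $a$-list, is injective, preserves the number of parts and the signs, and changes the size by exactly $-(p-q)\delta$. When $p=q$ (the only possibility once $L<M$ in the first case) this already preserves $N$ and finishes the argument. In general I would restore the size by moving parts up through their columns in steps of $M$: since $(p-q)\delta$ must be a multiple of $M$ — here the hypotheses ``$M$ even and $a$ odd'' enter, as they make $\delta$ even and hence $(p-q)\delta\in M\mathbb{Z}$ in the $\tfrac M2$ case — one adds back $(p-q)\delta$ by promoting suitable $+$ parts into higher slots, the extra top slot $LM+a$ furnishing exactly the room needed, while the possible slack $\gcd(a,M)>1$ relaxes the residue constraint on the $a$-side and leaves enough configurations to receive the image.

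The main obstacle, and where the real work lies, is the unbalanced case $p\ne q$ (which only occurs for large $L$): the compensating moves must be specified \emph{canonically} so that $\phi$ is genuinely injective and its image avoids collisions, all while respecting distinctness and the ceiling of $L+1$ available $+$-slots and $L$ available $-$-slots. In particular the asymmetry created by the single extra part $LM+a$ means the $+$-heavy and $-$-heavy cases are not interchangeable, so the $-$-heavy case would need a dual move (a reflection $a\leftrightarrow M-a$ together with reindexing) rather than a verbatim copy. Verifying reversibility of this bookkeeping — that the shift, the promotions, and the reflection can be jointly undone — is the crux; once it is in place, $\phi$ exhibits the desired inequalities $d(m,nM)\ge 0$ and $d\!\left(m,nM+\tfrac M2\right)\ge 0$.
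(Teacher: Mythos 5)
There is a genuine gap. Your proposal is a plan that matches the paper's strategy in spirit --- swap the ends $b\mapsto a$, $(M-b)\mapsto (M-a)$, use $\gcd(b,M)=1$ to force the excess $\nu_b-\nu_{M-b}$ into $M\mathbb{Z}$ (or $\tfrac M2+M\mathbb{Z}$), and then restore the size by moving parts up or down in steps of $M$ --- but the injection in the unbalanced case is never actually constructed, and you say as much (``the compensating moves must be specified canonically\dots is the crux''). That unbalanced case is the entire content of the theorem once $L$ is large, so what remains is not bookkeeping but the main argument. The paper's resolution is concrete: write $\lambda$ as a triple $\bigl(\lambda_{(b)},\lambda_{(M-b)},(b^{\nu_b},(M-b)^{\nu_{M-b}})\bigr)$ of two $M$-modular diagrams plus the multiset of ends, set $y=\tfrac{(b-a)(m-2k)}{M}$ and $z=\tfrac{(M-b-a)(m-2k)}{M}$, and compensate by attaching or removing a single \emph{column} of $y$ or $z$ copies of $M$ (i.e.\ promoting the largest $y$ or $z$ parts by one slot, all at once). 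Injectivity is then forced by a three-way case split --- excess $b$'s; excess $(M-b)$'s with $\lambda_{(M-b)}$ lacking the relevant column; excess $(M-b)$'s with $\lambda_{(M-b)}$ containing it --- whose images are separated by the sign of $\nu_a-\nu_{M-a}$ together with the presence or absence of that column in $\lambda_{(a)}$. Nothing in your sketch plays the role of this case separation, and without it ``promoting suitable $+$ parts'' is neither canonical nor reversible.

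The second missing ingredient is specific to this theorem as opposed to Theorem~\ref{T:M1}: you must check that the compensating moves preserve \emph{distinctness}. A promotion can collide with an occupied slot, and your proposal does not address this. The paper handles it by changing the case condition from ``contains a column of length $y$'' to ``contains \emph{two} columns of length $y$'': in the excess-$b$ case a distinct-parts modular diagram with $m-k$ nonnegative parts automatically contains one column of length $y<m-k$, so attaching a second keeps parts distinct; in the removal case the hypothesis of two columns guarantees one can be deleted without creating a repeat; and $b-a\neq M-b-a$ ensures the attach-$z$ case does not accidentally create two columns of length $y$. This is the actual new idea needed to pass from Theorem~\ref{T:M1} to Theorem~\ref{T:M2}, and it is absent from your proposal.
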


\begin{Rem} Taking the limit as $L \to \infty$ in Theorems \ref{T:M1} and \ref{T:M2} recovers McLaughlin's original partition inequalities. \end{Rem}

In Section 2, we begin by reviewing the $M$-modular diagram of a partition.  Then we provide combinatorial proofs of Theorems \ref{T:BG}-\ref{T:M2}.  In Section 3, we apply Theorem \ref{T:BG} and Andrews' Method of Anti-Telescoping (see \cite{A}) to give a solution to an ``Ehrenpreis Problem'' for recently conjectured sum-product identities of Kanade-Russell \cite{KR18}.  Our concluding remarks in Section 4 ask for Andrews-Baxter style ``motivated proofs'' of these conjectured identities.

\section*{Acknowledgements}
The author thanks Alexander Berkovich and Shashank Kanade for helpful comments on an earlier draft.

\section{Combinatorial Proofs of Theorems}
\subsection{Notation}  The {\it $M$-modular diagram} of a partition $\lambda =\{\lambda_1, \dots, \lambda_{\ell}\}$ is a modification of the Ferrer's diagram, wherein each $\lambda_j$ is first written as $Mq + r$ for $0 \leq r < M,$ and then is represented as a row of $q$ $M$'s and a single $r$ at the end of the row.    These $r$'s we will refer to as {\it ends} or {\it r-ends}.  For example, the $10$-modular diagram of $\lambda= (53^2, 46, 36,16, 11,1)$ has three 6-ends, two 3-ends and two 1-ends:
$$
\begin{matrix} 10 & 10 & 10 & 10 & 10 & 3 \\ 10 & 10 & 10 & 10 & 10 & 3 \\ 10 & 10 & 10 & 10  & 6 \\ 10 & 10 & 10 & 6 \\ 10 & 6 \\ 10 & 1 \\ 1 \\ \end{matrix}
$$

We will also speak of {\it attaching/removing} a column from an $M$-modular diagram.  These operations are best defined with an example:

\begin{align*}
\begin{matrix}
10 & 10 & 10 & 10 & 10 \\
10 & 10 & 10 & 10 & 10 \\
10 & 10 & 10 & 10 & \\
10 & 10 & 10 &  & \\
10 & 10 & 10 &  & \\
10 & 10 & 10 &  & \\
10 & & & & \\
\end{matrix} \underbrace{\cup}_{\substack{\text{{\bf attach}} \\ \text{{\bf the column}}} } \ \begin{matrix} 10 \\  10 \\  10 \\  \end{matrix} \ \ \ &\longrightarrow \ \ \ \begin{matrix}
10 & 10 & 10 & 10 & 10 & 10 \\
10 & 10 & 10 & 10 & 10 & 10\\
10 & 10 & 10 & 10 &  10 & \\
10 & 10 & 10 &  & \uparrow \\
10 & 10 & 10 &  & \\
10 & 10 & 10 &  & \\
10 & & & & \\
\end{matrix}
\\
\begin{matrix}
10 & 10 & 10 & 10 & 10 \\
10 & 10 & 10 & 10 & 10 \\
10 & 10 & 10 & 10 & \\
10 & 10 & 10 &  & \\
10 & 10 & 10 &  & \\
10 & 10 & 10 &  & \\
10 & & \uparrow  & & \\
\end{matrix} \underbrace{\setminus}_{\substack{\text{{\bf remove}} \\ \text{{\bf the column}}}} \ \begin{matrix} 10 \\ 10 \\ 10 \\ 10 \\ 10 \\ 10 \\ \end{matrix} \ \ \ &\longrightarrow  \ \ \ \begin{matrix}
10 &  10 & 10 & 10 \\
10 &  10 & 10 & 10 \\
10 &  10 & 10 & \\
10 &  10 &  & \\
10 &  10 &  & \\
10 &  10 &  & \\
10  & & & \\
\end{matrix}
\end{align*}
We shall only attach or remove columns consisting entirely of $M$'s, and it is easy to see that these operations preserve $M$-modular diagrams.

\subsection{Proof of Theorem \ref{T:BG}}  We provide a combinatorial proof via injection that is nearly identical to that of Theorem 5.1 in \cite{BG05}, but we highlight a technical difference that arises in the general version.  In keeping with \cite{BG05}, we let $\nu_j=\nu_j(\lambda)$ denote the number of parts of $\lambda$ congruent to $j \pmod{M}$.  (The modulus never varies and will be clear from context.)

\begin{proof}

First let $L=1$.  We will prove the general case as a consequence of this one.  For each $n$, we seek an injection $$ \varphi_1: \left\{(a^k,b^{\ell}) \vdash n : k, \ell \geq 0 \right\} \hookrightarrow \left\{(1^k,c^{\ell}) \vdash n : k, \ell \geq 0\right\}.$$    Let $d:=$gcd$(a,b)$.  Explicitly, $\varphi_1$ is as follows. $$\varphi_1\left(a^k,b^{\ell}\right) = \begin{cases} \left(1^{\ell+a(k-\ell)}, c^{\ell} \right) & \text{if $k \geq \ell$,} \hspace{25mm} \qquad \text{(Case 1)} \\ \left(1^{k+b(\ell-k)}, c^{k} \right) & \text{if $\ell > k$ and $\frac{a}{d} \nmid (\ell-k)$,} \qquad \text{(Case 2)} \\ \left(1^{k+1+b(\ell-k-1)-a}, c^{k+1} \right) & \text{if $\ell > k$ and $\frac{a}{d} \mid (\ell-k)$.} \qquad \text{(Case 3)} \\ \end{cases}$$  
This definition can be motivated by noting that each preimage consists either of $k$ pairs $(a,b)$ and $k-\ell$ excess $a$'s, or of $\ell$ pairs $(a,b)$ and $\ell-k$ excess $b$'s.  (There can also be no excess.)  The pairs are mapped as $(a,b) \mapsto (1,c)$.  The excess $a$'s or $b$'s are treated by the following cases.

\begin{flalign*}
\text{Case 1.}  \quad & \text{For the $k-\ell$ excess $a$'s, $(a) \mapsto (1^a)$.} && \\
\text{Case 2.}  \quad & \text{For the $\ell-k$ excess $b$'s, $(b) \mapsto (1^b)$.} && \\
\text{Case 3.}  \quad & \text{For all but the last two excess $b$'s, $(b) \mapsto (1^b)$.  For the last two $b$'s} &&  \\ \quad &\text{$(b) \mapsto (1^{b-a+1},c)$.} && \\
\end{flalign*}
Note that in Case 3 there are at least two excess $b$'s, for if not, $\frac{a}{d}=1$ and then $a \mid b$, \newline a contradiction.  Also, by hypothesis, $b > \frac{c}{2}$, so that $2b > c$.

Let $ \left(1^{\nu_1}, c^{\nu_c} \right)$ be a partition in the image of $\varphi_1$.    The cases are separated as follows:

\begin{flalign*}
\text{Case 1.} \quad & a \mid (\nu_{1}-\nu_{c}), && \\
\text{Case 2.} \quad & a \nmid (\nu_1-\nu_{c}) \ \text{and} \ b \mid (\nu_1-\nu_{c}), && \\
\text{Case 3.} \quad & \nu_1-\nu_{c} \equiv -b \pmod{a} \ \text{and} \ \nu_{1}-\nu_{c} \equiv -a  \pmod{b}. &&
\end{flalign*}

This concludes the proof for $L=1$.

Now let $L \geq 2$.  Again we define an injection
$$ \varphi_L: \left\{ \lambda \vdash n : \lambda_j \in \{ a, b, \dots, LM + a, LM + b\} \right\} \hookrightarrow \left\{ \lambda \vdash n : \lambda_j \in\{ 1, c, \dots, LM + 1, LM + c\} \right\}.$$
Let $\lambda$ be a partition in the left set.  Then $\lambda$ consists of the triple $$\left(\lambda_{(a)},\lambda_{(b)}, (a^k,b^{\ell} ) \right),$$ where $\lambda_{(a)}$ is the $M$-modular diagram obtained by subtracting $a$ from every part of the form $Mj+a$; $\lambda_{(b)}$ is defined similarly.  We apply $\varphi_1$ to $(a^k,b^{\ell})$ and reattach the $1$-ends and $c$-ends based on the case into which $(a^k,b^{\ell})$ falls.
\\
\

 Case 1: $k \geq \ell.$  Attach the $1$-ends to $\lambda_{(a)}$ and the $c$-ends to $\lambda_{(b)}$.  The map $\varphi_1$ guarantees exactly $\#\lambda_{(b)}$ $c$-ends.  Likewise, there are at least as many $1$-ends as there are parts of $\lambda_{(a)}$; any excess 1's are attached as parts to $\lambda_{(a)}$.  The required image of $\lambda$ is then the union of these two partitions.
\\
\

Cases 2 and 3: $\ell > k$.  Attach the $1$-ends to $\lambda_{(b)}$ and the $c$-ends to $\lambda_{(a)}$ as before.  $\varphi_1$ guarantees at least $\# \lambda_{(a)}$ $c$-ends.  In Case 2 we are guaranteed at least $\#\lambda_{(a)}$ 1-ends because $b > 1$ implies $$k + b(\ell-k) > \ell.$$ In Case 3, $\frac{a}{d}>1$ implies $\ell-k > 1$, so $$ k+1 + b(\ell-k-1)-a= \ell + (b-1)(\ell-k-1)-a \geq \ell,$$ and we are guaranteed at least $\#\lambda_{(a)}$ 1-ends.
\\
\

Given the image of $\lambda$, we may clearly recover $\lambda_{(a)}$ and $\lambda_{(b)}$ based on its $1$-ends and $c$-ends and the fact that $\varphi_1$ is an injection.  Thus, $\varphi_L$ is an injection.

\end{proof}

\begin{Rem} The condition $a \nmid b$ in Theorem \ref{T:BG} is necessary to avoid cases like $$\frac{1}{(q,q^5;q^6)_{L}} - \frac{1}{(q^2,q^4;q^6)_L},$$ in which the coefficient of $q^4$ is $-1$. \end{Rem}

\begin{Rem}  If we had copied the proof of Theorem 5.1 in \cite{BG05} exactly, then the conditions ``$\frac{a}{d} \mid$'' and ``$\frac{a}{d} \nmid$'' would be replaced by ``$a \mid$'' and ``$a \nmid$''.  But this is not an injection because Case 2 is only correctly separated from the other two when gcd$(a,b)=1$.  For example, this direct version of Berkovich-Garvan's map gives:
$$\begin{cases} 4^7,6^4 \\ 4^4,6^6 \end{cases} \longrightarrow (1^{16},9^4), \qquad \text{instead of our} \qquad \begin{cases} 4^7,6^4 \\ 4^4,6^6 \end{cases} \longrightarrow \begin{cases} 1^{16},9^4 \\ 1^7,9^5  \end{cases}.$$  In the first example, the partitions fall into cases 1 and 2.  The second example corrects the overlap and places the partitions into cases 1 and 3. \end{Rem}

We demonstrate the injection of Theorem \ref{T:BG} with an example.

\begin{Ex} Here, $(n,M,L,a,b,c)=(52,10,2,4,6,9)$.  Numbers above arrows indicate the case into which a preimage falls.
$$
\begin{matrix}
16^3,4 & \stackrel{3}{\to} & 11^3,9^2,1 \\
16^2,14,6 & \stackrel{3}{\to} & 19,11^2,9,1^2\\
16^2,6^2,4^2 & \stackrel{3}{\to} & 11^2,9^3,1^3\\
16^2,4^5 & \stackrel{1}{\to} & 19^2,1^{14}\\
16,14^2,4^2 & \stackrel{1}{\to} & 19,11^2, 1^{11}\\
16,14,6^3,4 & \stackrel{3}{\to} & 19,11,9^2,1^4\\
16,14,6,4^4 & \stackrel{1}{\to} & 19,11,9,1^{13}\\
16,6^6 & \stackrel{2}{\to} & 11,1^{41}\\
16,6^4,4^3 & \stackrel{3}{\to} & 11,9^4,1^5 \\
16,6^2,4^6 & \stackrel{1}{\to} & 19,9^2,1^{15} \\
16,4^9 & \stackrel{1}{\to} & 19,1^{33}\\
14^3,6,4 & \stackrel{1}{\to} & 11^3, 9, 1^{10}\\ \end{matrix} \qquad \vline \qquad \begin{matrix}
14^2,6^4& \stackrel{3}{\to} & 19^2,9,1^5\\
14^2,6^2,4^3 & \stackrel{1}{\to} & 11^2,9^2,1^{12}\\
14^2,4^6 & \stackrel{1}{\to} & 11^2, 1^{30} \\
14,6^5,4^2 & \stackrel{3}{\to} & 19,9^3,1^6 \\
14,6^3,4^5 & \stackrel{1}{\to} & 11,9^3,1^{14}\\
14,6,4^8 & \stackrel{1}{\to} & 11,9,1^{32} \\
6^8,4 & \stackrel{2}{\to} & 9,1^{43} \\
6^6,4^4 & \stackrel{3}{\to} & 9^5,1^7 \\
6^4,4^7 & \stackrel{1}{\to} & 9^4, 1^{16} \\
6^2,4^{10} & \stackrel{1}{\to} & 9^2,1^{34} \\
4^{13} & \stackrel{1}{\to} & 1^{52} \\
\end{matrix}
$$
\end{Ex}

\subsection{Proofs of Theorems \ref{T:M1} and \ref{T:M2}}  We begin by recalling the main steps in McLaughlin's proof of Theorem 7 from \cite{M}; our proof is based on a combinatorial reading.  First, Cauchy's Theorem (\cite{AP}, Th. 2.1) is used with some algebraic manipulation to write, for fixed $m$, 
\begin{align*} \sum_{n \geq 0} c(m,n)q^n &= \sum_{\substack{0\leq k < \frac{m}{2} \\ M \mid m-2k}} \frac{q^{kM}}{(q^M;q^M)_{m-k}(q^M;q^M)_k} \\ & \qquad \times\left(q^{a(m-2k)} + q^{(M-a)(m-2k)} - q^{b(m-2k)} - q^{(M-b)(m-2k)} \right). \end{align*}  
It then happens that the factor in parentheses is equal to $$ q^{a(m-2k)}\left(1-q^{(b-a)(m-2k)}\right)\left(1-q^{(M-b-a)(m-2k)}\right).$$  But the conditions on $a,b$ and $M$ that lead to the condition $M \mid (m-2k)$ in the sum imply that both factors above are canceled in $\frac{1}{(q^M;q^M)_{m-k}}.$  This gives nonnegativity.

The key steps in the proof are the decomposition of the sum over $k$ and the nonnegativity of $$\frac{(1-q^r)(1-q^s)}{(q;q)_n} \qquad \text{for $1 \leq r < s \leq n$.}$$  Both of these have simple combinatorial explanations, which we employ with $M$-modular diagrams to piece together a proof of Theorem \ref{T:M1}.  Our proof naturally leads to the finite versions with any $L\geq 1$ instead of $\infty$. The proof of Theorem \ref{T:M2} is then a slight modification.

\begin{proof}[Proof of Theorem 1.2]
Let $\mathcal{P}(n,m,j,A)$ denote the set of partitions of $n$ into $m$ parts congruent to $\pm j$ modulo $M$ such that the largest part is at most $A$.  (We have suppressed the modulus $M$ from the notation.)  Let $\mathcal{P}_k(n,m,j,A)$ be the subset of partitions $\lambda \in \mathcal{P}(n,m,j,A)$ with either $\nu_j(\lambda)=k$ or $ \nu_{M-j}(\lambda)= k.$

Clearly, $\mathcal{P}(n,m,j,A) = \bigcup_{0 \leq k \leq \frac{m}{2}} \mathcal{P}_k(n,m,j,A).$  Thus, to show $$\mathcal{P}(nM,m,b,LM-b) \hookrightarrow \mathcal{P}(nM,m,a,LM+a),$$ we may provide injections $$\varphi_k: \mathcal{P}_k(nM,m,b,LM-b) \hookrightarrow \mathcal{P}_k(nM,m,a,LM+a)$$ for each $k \in [0,\frac{m}{2}]$.

Each $\lambda \in \mathcal{P}(nM,m,b,LM-b)$ consists of a triple $$\left(\lambda_{(b)}, \lambda_{(M-b)}, (b^{\nu_b},(M-b)^{\nu_{M-b}}) \right),$$ where $\lambda_{(b)}$ is the $M$-modular diagram with $\nu_b$ nonnegative parts created by removing the $b$-ends.  The $M$-modular diagram $\lambda_{(M-b)}$ is defined analogously by removing the $(M-b)$-ends.

When $k=\frac{m}{2}$, we simply map $$\varphi_{\frac{m}{2}}\left(\lambda_{(b)}, \lambda_{(M-b)}, (b^{\frac{m}{2}},(M-b)^{\frac{m}{2}}) \right) := \left(\lambda_{(b)}, \lambda_{(M-b)}, (a^{\frac{m}{2}},(M-a)^{\frac{m}{2}}) \right).$$  The required partition is then obtained by reattaching the $a$-ends to $\lambda_{(b)}$ and reattaching the $(M-a)$-ends to $\lambda_{(M-b)}$.

Now assume $k < \frac{m}{2}.$  Note that
\begin{equation}\label{nub}
0 \equiv nM \equiv b\nu_b(\lambda)-b\nu_{M-b}(\lambda) \pmod{M}, \end{equation} which implies $\nu_b(\lambda) - \nu_{M-b}(\lambda) \equiv 0 \pmod{M}$ because $\gcd(b,M)=1$.  Thus, we assume without loss of generality that $M \mid (m-2k).$

Let $y:=\frac{(b-a)(m-2k)}{M}$ and $z:= \frac{(M-b-a)(m-2k)}{M}$.  These are positive integers.
\bigskip

\noindent    Case 1: $\nu_{M-b}(\lambda)=k$.  There are $k$ pairs of $(b,M-b)$ and $m-2k$ excess $b$'s.  We map
$$\varphi_k\left(\lambda_{(b)}, \lambda_{(M-b)}, (b^{m-k},(M-b)^{k}) \right) := \left(\lambda_{(b)} \cup  \underbrace{\begin{bmatrix} M \\ \vdots \\ M \end{bmatrix}}_{y \ \text{rows}}, \lambda_{(M-b)}, (a^{m-k},(M-a)^{k}) \right) $$ $$=: \left(\lambda_{(b)}', \lambda_{(M-b)}, (a^{m-k},(M-a)^k \right).$$  Here $\lambda_{(b)}'$ is the $M$-modular diagram formed by attaching the above column to $\lambda_{(b)}$.  Note that $a < b < M$ implies
$0 < y < m-k,$
so that $\lambda_{(b)}'$ is still an $M$-modular diagram with $m-k$ nonnegative parts.

To obtain the required partition, attach the $a$-ends to $\lambda_{(b)}'$ and the $(M-a)$-ends to $\lambda_{(M-b)}$.  It is evident that there are $m$ parts.  Size is preserved, as
\begin{align*}
& |\lambda_{(b)}'| + | \lambda_{(M-b)}| + (m-k)a + k(M-a) \\ &= |\lambda_{(b)}| + My + | \lambda_{(M-b)}| + a(m-2k) + kM \\&= |\lambda_{(b)}| + (b-a)(m-2k) + | \lambda_{(M-b)}| + a(m-2k) + kM  \\ & = |\lambda_{(b)}|  + | \lambda_{(M-b)}|+ b(m-2k) + kM \\ &= |\lambda|.
\end{align*}
Moreover, it is clear that the operations are reversible, so that, within Case 1, \newline $\varphi_k$ is an injection.
\bigskip

\noindent Case 2a: $\nu_b(\lambda)=k$ and $\lambda_{(M-b)}$ does not contain a column of length $y$.\footnote{Or equivalently, the $y$-th part of $\lambda_{(M-b)}$ equals the $(y+1)$-st part.}  There are $k$ pairs of $(b,M-b)$ and $m-2k$ excess $(M-b)$'s.  We map $$\varphi_k\left(\lambda_{(b)}, \lambda_{(M-b)}, (b^{k},(M-b)^{m-k}) \right) := \left(\lambda_{(b)} , \lambda_{(M-b)}\cup  \underbrace{\begin{bmatrix} M \\ \vdots \\ M \end{bmatrix}}_{z \ \text{rows}}, (a^{m-k},(M-a)^{k}) \right) $$ $$=: \left(\lambda_{(b)}, \lambda_{(M-b)}', (a^{m-k},(M-a)^k \right),$$  where $\lambda_{(M-b)}'$ is defined by attaching the above column.  Note again that $b,a < \frac{M}{2}$ implies $0 < z < m-k,$ so that $\lambda_{(M-b)}'$ is still an $M$-modular diagram with $m-k$ nonnegative parts.  Furthermore, $b-a \neq M-b-a$, so $\lambda_{(M-b)}'$ still does not contain a column of length $y.$

To obtain the required partition, attach the $a$-ends to $\lambda_{(M-b)}'$ and the $(M-a)$-ends to $\lambda_{(b)}$.  It is evident that there are $m$ parts.  Size is preserved, as
\begin{align*}
& |\lambda_{(b)}| + | \lambda_{(M-b)}'| + (m-k)a + k(M-a) \\ &= |\lambda_{(b)}|  + | \lambda_{(M-b)}| + Mz+ a(m-2k) + kM\\&= |\lambda_{(b)}|  + | \lambda_{(M-b)}| + (M-b-a)(m-2k)+ a(m-2k) + kM  \\ & = |\lambda_{(b)}|  + | \lambda_{(M-b)}|+ (M-b)(m-2k) + kM \\ &= |\lambda|.
\end{align*}
Moreover, it is clear that the operations are reversible, so that, within Case 2a, $\varphi_k$ is an injection.
\bigskip

\noindent  Case 2b: $\nu_b(\lambda)=k$ and $\lambda_{(M-b)}$ contains a column of length $y$.\footnote{Or equivalently, the $y$-th part of $\lambda_{(M-b)}$ is strictly greater than the $(y+1)$-st part.}  In this case we send $$\left(\lambda_{(b)}, \lambda_{(M-b)}, (b^{k},(M-b)^{m-k}) \right) \mapsto \left(\lambda_{(b)} , \lambda_{(M-b)}\setminus \underbrace{\begin{bmatrix} M \\ \vdots \\ M \end{bmatrix}}_{y \ \text{rows}}, (a^{k},(M-a)^{m-k}) \right) $$ $$=: \left(\lambda_{(b)}, \lambda_{(M-b)}', (a^{k},(M-a)^{m-k} \right),$$ where  $\lambda_{(M-b)}'$ is defined by removing the above column.  As before, we still may consider $\lambda_{(M-b)}'$ an $M$-modular diagram with $m-k$ nonnegative parts.

To obtain the required partition, attach the $a$-ends to $\lambda_{(b)}$ and the $(M-a)$-ends to $\lambda_{(M-b)}'$.  It is evident that there are $m$ parts.  Size is preserved, as
\begin{align*}
 & |\lambda_{(b)}| + |\lambda_{(M-b)}'| + ka + (m-k)(M-a) \\ &= |\lambda_{(b)}|   + | \lambda_{(M-b)}|- My + kM + (M-a)(m-2k)\\ &= |\lambda_{(b)}|   + | \lambda_{(M-b)}|- (b-a)(m-2k) + kM + (M-a)(m-2k)  \\ & = |\lambda_{(b)}|  + | \lambda_{(M-b)}|+ (M-b)(m-2k) + kM \\ &= |\lambda|.
\end{align*}
Moreover, it is clear that the operations are reversible, so that, within Case 2b, $\varphi_k$ is an injection.

Let $\left(\lambda_{(a)},\lambda_{(M-a)}, a^{\nu_a},(M-a)^{\nu_{M-a}}\right)$ lie in the image of $\varphi_k$.  Then cases are separated as follows.

\begin{flalign*} &\text{Case 1: } \quad \nu_a > \nu_{M-a} \ \text{and $\lambda_{(a)}$ contains a column of length $y$.} &&  \\ & \text{Case 2a:} \quad \nu_a > \nu_{M-a} \ \text{and $\lambda_{(a)}$ does not contain a column of length $y$.} && \\ &\text{Case 2b:} \quad \nu_a < \nu_{M-a}. && \\ \end{flalign*} 

Finally, note that in each case $\varphi_k$ adds at most $M$ to the largest part of what becomes $\lambda_{(a)}$, so indeed $\varphi_k$ maps $\mathcal{P}_k(nM,m,b,LM-b)$ into $\mathcal{P}_k(nM,m,a,LM+a)$ as required.  This completes the proof of the first statement.

When $M$ is even and $a$ is odd, we can use exactly the same injections, assuming because of \eqref{nub} that $m-2k \equiv \frac{M}{2} \pmod{M}$.  We note that $\gcd(b,M)=1$ implies that $b$ is also odd, so $y$ and $z$ are still integers.
\end{proof}

\begin{Rem} We note that the extra factor $\frac{1}{(1-q^{LM+a})}$ in the left term of Theorem \ref{T:M1} is necessary.  For example, in $$\frac{1}{(zq^2,zq^5;q^{7})_2}- \frac{1}{(zq^3,zq^4;q^{7})_2},$$ the coefficients of $z^{7}q^{70},z^{13}q^{70},z^{16}q^{70},$ and $z^{18}q^{70}$ are all negative.\end{Rem}

The proof of Theorem \ref{T:M2} is similar, but now cases are determined by columns that occur twice.

\begin{proof}[Proof of Theorem 1.3]
We define injections $\varphi_k'$ to be the same as $\varphi_k$, except that in Cases 2a and 2b we condition on whether or not a partition contains {\it two} columns of length $y$.  This ensures that $\varphi_k'$ preserves distinct parts partitions:

\bigskip
\noindent  Case 1.  Note that $\lambda_{(b)}$ is a distinct parts partition into $m-k$ nonnegative parts (so 0 occurs at most once).  As such, $\lambda_{(b)}$ must contain a column of length $y$.  (Recall that $y < m-k.$) Attaching another such column means that $\lambda_{(b)}'$ still has distinct nonnegative parts.  Attaching the ends as above also preserves distinct parts.

\bigskip
\noindent  Case 2a.  Again attaching the column to $\lambda_{(M-b)}$ preserves distinct parts because $z < m-k$.  The fact that $M-b-a \neq b-a$ implies that $\lambda_{(M-b)}'$ still does not contain two columns of length $y$.

\bigskip
\noindent   Case 2b.  Since $\lambda_{(M-b)}$ contains two columns of length $y$, removing one such column preserves distinct parts.

\bigskip
Cases are separated as follows.

\begin{flalign*} &\text{Case 1: } \quad \nu_a > \nu_{M-a} \ \text{and $\lambda_{(a)}$ contains two columns of length $y$.} &&  \\ & \text{Case 2a:} \quad \nu_a > \nu_{M-a} \ \text{and $\lambda_{(a)}$ does not contain two columns of length $y$.} && \\ &\text{Case 2b:} \quad \nu_a < \nu_{M-a}. && \\ \end{flalign*}
This concludes the proof.
\end{proof}

\begin{Rem}
Unlike in Theorem \ref{T:M1}, it appears that the extra factor $\left(1+q^{LM+a}\right)$ in the left term of Theorem \ref{T:M2} is often not needed for nonnegativity.  A computational search up to $M \leq 12$, $L \leq 20$ and $nM \leq 250$ reveals that for $$\sum_{m,n \geq 0} d'(m,n)z^mq^n := (-zq^a,-zq^{M-a};q^M)_L -(-zq^b,-zq^{M-b};q^M)_L,$$ we have some $d'(m,nM) < 0$ only when $(a,b,M)=(1,2,5)$.
\end{Rem}

In fact, we can condition on more than just 2 columns to prove the following new result.

\begin{Prop}\label{P:Md}
Let $d \geq 0$, $1\leq a< b < \frac{M}{2}$ and $\gcd(b,M)=1$.  Let $p^{(d)}(n,m,j,A)$ denote the number of partitions of $n$ into $m$ parts congruent to $\pm j \pmod{M}$, whose parts are at most $A$ such that the gap between successive parts is greater than $dM$.  Then for all $n ,m \geq 0,$ $$p^{(d)}(nM,m,a,LM+a) \geq p^{(d)}(nM,m,b,LM-b).$$  If in addition $a$ is odd, then we also have $$p^{(d)}\left(nM+\frac{M}{2},m,a,LM+a\right) \geq p^{(d)}\left(nM+\frac{M}{2},m,b,LM-b\right).$$
\end{Prop}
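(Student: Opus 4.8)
The plan is to deduce the Proposition from the already-established Theorem \ref{T:M2} by a single size-reduction bijection, rather than by extending the injections $\varphi_k'$ to condition on $d+2$ columns. The key observation is that the case $d=0$ is precisely the combinatorial content of Theorem \ref{T:M2}: reading off the coefficient of $z^mq^{nM}$ (resp.\ $z^mq^{nM+M/2}$) in the generating-function difference, and noting that a part $\equiv M-a\pmod M$ is at most $LM+a$ exactly when it is at most $LM-a$ (since $a<\frac{M}{2}$), the statement $d(m,\cdot)\ge 0$ is exactly $p^{(0)}(nM,m,a,LM+a)\ge p^{(0)}(nM,m,b,LM-b)$, and likewise for the residue $\frac{M}{2}$. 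So it suffices to reduce general $d$ to $d=0$.

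For general $d\ge 0$ I would introduce the staircase map $\sigma$. Given $\lambda$ counted by $p^{(d)}(N,m,j,A)$, write its parts in decreasing order $\lambda_1>\lambda_2>\cdots>\lambda_m$ (they are strictly decreasing because the gap exceeds $dM\ge0$) and set
\[ \sigma(\lambda)_i:=\lambda_i-(m-i)\,dM\qquad(1\le i\le m). \]
The heart of the argument is to check that $\sigma$ is a bijection from the set counted by $p^{(d)}(N,m,j,A)$ onto the set counted by $p^{(0)}(N',m,j,A')$, where $N'=N-dM\binom{m}{2}$ and $A'=A-(m-1)dM$. Indeed, each $\sigma(\lambda)_i\equiv\lambda_i\equiv\pm j\pmod M$; the gaps drop by exactly $dM$, so $\sigma(\lambda)_i-\sigma(\lambda)_{i+1}=\lambda_i-\lambda_{i+1}-dM>0$ and $\sigma(\lambda)$ has distinct parts (and all parts stay positive); the number of parts is unchanged; the size changes by $-dM\sum_i(m-i)=-dM\binom{m}{2}$; and $\lambda_1\le A$ is equivalent to $\sigma(\lambda)_1\le A'$. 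Adding back the staircase gives the inverse, so $\sigma$ is a bijection.

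Applying this with $A=LM+a$ and $A=LM-b$ produces the common shift $L':=L-(m-1)d$, giving bounds $L'M+a$ and $L'M-b$; applying it with $N=nM$ or $N=nM+\frac{M}{2}$ gives $N'=n'M$ or $N'=n'M+\frac{M}{2}$ for $n':=n-d\binom{m}{2}$. Hence
\begin{align*}
p^{(d)}(nM,m,a,LM+a)&=p^{(0)}(n'M,m,a,L'M+a),\\
p^{(d)}(nM,m,b,LM-b)&=p^{(0)}(n'M,m,b,L'M-b),
\end{align*}
so the two quantities have exactly the form appearing in Theorem \ref{T:M2} after the substitution $(L,n)\mapsto(L',n')$. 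If $L'\ge0$ and $n'\ge0$, the desired inequality is now literally the $d=0$ statement, i.e.\ Theorem \ref{T:M2} with $L'$ in place of $L$; the second assertion uses that $M$ is even and $a$ is odd, which are intrinsic to $a,M$ and unaffected by $\sigma$. If $L'<0$ or $n'<0$, then both sides vanish (there is no room for $m$ parts with gaps exceeding $dM$, equivalently $L'M\pm a<0$ or the reduced size is negative), and the inequality holds trivially.

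The only point I expect to require care is the base case: confirming that the $d=0$ statement really is Theorem \ref{T:M2}, which hinges on reconciling the asymmetric generating-function bounds (parts $\equiv a$ up to $LM+a$, parts $\equiv M-a$ up to $LM-a$) with the single threshold $A$ used by $\sigma$ and by $p^{(d)}$. This works precisely because $a<\frac{M}{2}$ forces the largest admissible part $\equiv M-a$ below $LM+a$ to equal $LM-a$. Everything else is a routine verification that $\sigma$ commutes with the residue condition, the part-count, and the reduction of the size and the bound by the appropriate multiples of $M$; once these are in place the Proposition follows from Theorem \ref{T:M2} with no further case analysis and, in particular, without conditioning on $d+2$ columns of length $y$.
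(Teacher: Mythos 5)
Your argument is correct, but it takes a genuinely different route from the paper. The paper does not reduce to the $d=0$ case at all: it re-runs the injections $\varphi_k$ directly, observing that the gap condition forces successive parts of each modular diagram $\lambda_{(b)},\lambda_{(M-b)}$ to differ by at least $(d+1)M$, and then conditions in Cases 2a/2b on whether $\lambda_{(M-b)}$ contains $d+2$ columns of length $y$, so that attaching or removing a single column preserves the gap condition. Your staircase $\sigma(\lambda)_i=\lambda_i-(m-i)dM$ instead collapses the whole statement onto $d=0$, and your identification of that base case with Theorem \ref{T:M2} is right: the coefficient of $z^mq^N$ in $(-zq^a,-zq^{M-a};q^M)_L(1+zq^{LM+a})$ counts partitions of $N$ into $m$ distinct parts $\equiv\pm a\pmod{M}$, each at most $LM+a$, which is exactly $p^{(0)}(N,m,a,LM+a)$, and similarly for $b$ with bound $LM-b$ (using $a,b<\frac{M}{2}$ to pin down the largest admissible part in each residue class). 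Be aware that the paper's remark that substituting $d=0$ and $d=1$ recovers Theorems \ref{T:M1} and \ref{T:M2} is off by one relative to the literal reading of ``gap greater than $dM$''; the ``$d+2$ columns'' in the paper's own proof agrees with your indexing ($d=0$ corresponds to Theorem \ref{T:M2}), while Theorem \ref{T:M1}, which allows repeated parts, is not a literal specialization. The rest of your bookkeeping checks out: residues, the part count, and the residue of the size modulo $M$ (including the $\frac{M}{2}$ shift) are preserved, the two bounds $LM+a$ and $LM-b$ drop by the same $(m-1)dM$ to $L'M+a$ and $L'M-b$, and the degenerate cases $L'<0$ or $n'<0$ are correctly dismissed. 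What your approach buys is that no new injection needs to be designed or verified --- in particular you avoid checking that the column operations of $\varphi_k$ preserve the global gap condition, a point the paper's terse proof leaves implicit; what the paper's approach buys is a single uniform family of injections $\varphi_k^{(d)}$ covering all $d$ at once.
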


Substituting $d=0$ and $d=1$ gives Theorems \ref{T:M1} and \ref{T:M2} respectively.
\begin{proof}
Let $\lambda = \left(\lambda_{(b)}, \lambda_{(M-b)}, (b^{\nu_b}, (M-b)^{\nu_{M-b}}) \right)$ be a partition counted by $p^{(d)}(nM,m,b,LM-b)$.  Then the $M$ modular diagrams $\lambda_{(b)}$ and $\lambda_{(M-b)}$ are partitions into nonnegative multiples on $M$ such that the difference in successive parts is at least $ (d+1)M$.  Our injections $\varphi_k^{(d)}$ are the same as before, except that we condition in cases 2 or 3 on whether or not $\lambda_{(M-b)}$ contains $d+2$ columns of length $y$.
\end{proof}

\section{Applications to Kanade-Russell's Conjectures}
In \cite{KR18}, Kanade-Russell conjectured several new Rogers-Ramanujan-type product-sum identities---three arising from the theory of Affine Lie Algebras, and several companions.  Bringmann--Jennings-Shaffer--Mahlburg were able to prove many of these \cite{BJSM}, and they reduced the sum-sides of the four conjectures below from triple series to a single series.  Here, $\mathcal{KR}_j$ is Identity $j$ in \cite{KR18}, and $H_j(x)$ is the sum side as denoted in \cite{BJSM}.

 \begin{align*} &\mathcal{KR}_4: & H_4(1)&= \frac{1}{(q,q^4,q^5,q^9,q^{11};q^{12})_{\infty}}, \\ &\mathcal{KR}_{4a}: & H_5(1) &= \frac{1}{(q,q^5,q^7,q^8,q^9;q^{12})_{\infty}}, \\ &\mathcal{KR}_{6}: & H_8(1)&= \frac{1}{(q,q^3,q^7,q^8,q^{11};q^{12})_{\infty}}, \\ &\mathcal{KR}_{6a}: & H_9(1)&= \frac{1}{(q^3,q^4,q^5,q^7,q^{11};q^{12})_{\infty}}. \end{align*}

The pairs of sum-sides, $\left(H_4(1),H_5(1) \right)$ and $\left(H_8(1),H_9(1) \right)$, are composed of two generating functions for partitions that satisfy the same set of gap conditions, but $H_5$ and $H_9$ have an additional condition on the smallest part (see \cite{KR18}).  Hence, as with the Rogers-Ramanujan sum-sides, we have $$H_4(1)-H_5(1) \succeq 0 \qquad \text{and} \qquad H_8(1)-H_9(1) \succeq 0,$$ which, {\it if the conjectures are true}, implies the following result.

\begin{Prop}  The following inequalities hold.
\begin{equation}\label{KR1} \frac{1}{(q,q^4,q^5,q^9,q^{11};q^{12})_{\infty}} - \frac{1}{(q,q^5,q^7,q^8,q^9;q^{12})_{\infty}} \succeq 0, \end{equation} \begin{equation}\label{KR2} \frac{1}{(q,q^3,q^7,q^8,q^{11};q^{12})_{\infty}} - \frac{1}{(q^3,q^4,q^5,q^7,q^{11};q^{12})_{\infty}} \succeq 0. \end{equation}
\end{Prop}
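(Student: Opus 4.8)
The plan is to treat the two inequalities \eqref{KR1} and \eqref{KR2} by first isolating the residue classes modulo $12$ that the two product-sides share, and then comparing what remains. For \eqref{KR2} this reduces matters immediately to Theorem \ref{T:BG}. Writing the two denominators as products over the residues $\{1,3,7,8,11\}$ and $\{3,4,5,7,11\}$ modulo $12$, I would factor out the common part $\frac{1}{(q^3,q^7,q^{11};q^{12})_{\infty}}$, which is a nonnegative series, and be left with $\frac{1}{(q,q^8;q^{12})_{\infty}} - \frac{1}{(q^4,q^5;q^{12})_{\infty}}$. Here the symmetric difference of the residue sets is $\{1,8\}$ against $\{4,5\}$, and since $1+8 = 4+5$ with $1 < 4 < 5 < 8$ and $4 \nmid 5$, this is exactly the $L \to \infty$ case of Theorem \ref{T:BG} with $(a,b,c,M) = (4,5,8,12)$. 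As the finite inequalities hold for every $L$ and the coefficient of each $q^n$ stabilizes as $L \to \infty$, nonnegativity passes to the limit and \eqref{KR2} follows.

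The inequality \eqref{KR1} is genuinely harder, and this is where Andrews' anti-telescoping enters. The two denominators correspond to residues $\{1,4,5,9,11\}$ and $\{1,5,7,8,9\}$; their shared classes are $\{1,5,9\}$, and the symmetric difference is $\{4,11\}$ against $\{7,8\}$. Although $4 + 11 = 7 + 8$, neither pair contains the class $1$, so Theorem \ref{T:BG} does not apply to the reduced difference $\frac{1}{(q^4,q^{11};q^{12})_{\infty}} - \frac{1}{(q^7,q^8;q^{12})_{\infty}}$; in fact this reduced difference is \emph{not} nonnegative, its coefficient of $q^7$ being $-1$. Thus I cannot simply cancel the common factor: the nonnegativity in \eqref{KR1} relies essentially on convolving this ``bad'' difference against the shared classes $\{1,5,9\}$, and must be obtained without discarding them.

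My approach to \eqref{KR1} is therefore to apply anti-telescoping directly to the full five-fold products, retaining the class-$1$ parts. Writing $A_{\infty}, B_{\infty}$ for the two denominators and grouping the factors of each into aligned blocks $\alpha_L, \beta_L$ with $\prod_L \alpha_L = A_{\infty}$ and $\prod_L \beta_L = B_{\infty}$, the anti-telescoping identity takes the form $\frac{1}{A_{\infty}} - \frac{1}{B_{\infty}} = \sum_{L \geq 1} F_L \bigl( \frac{1}{\alpha_L} - \frac{1}{\beta_L}\bigr)$, where each $F_L$ is the nonnegative series built from the blocks $\alpha_1,\dots,\alpha_{L-1}$ and $\beta_{L+1},\beta_{L+2},\dots$. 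The goal is to choose the blocks so that every block difference $\frac{1}{\alpha_L} - \frac{1}{\beta_L}$ is itself of the shape in Theorem \ref{T:BG}, i.e. a comparison $\frac{1}{(q,q^c;q^{12})_{\ell}} - \frac{1}{(q^a,q^b;q^{12})_{\ell}}$ with $1 + c = a + b$ and $a \nmid b$, hence nonnegative on its own. Here I would exploit the fact that Theorem \ref{T:BG} does not require $a,b,c \leq M$: allowing parts in classes $c > 12$ lets a class-$1$ factor be paired across levels, so that the dominant block genuinely carries the $\{1,c\}$ structure that the raw symmetric difference of \eqref{KR1} lacks.

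The main obstacle is precisely this block decomposition. The class-$1$ parts occur equally often in $A_{\infty}$ and $B_{\infty}$, so one cannot route them uniformly into the dominant side of every block; concretely, a term-by-term chain of Theorem \ref{T:BG} inequalities must net one extra class-$1$ part per step and therefore cannot preserve the class-$1$ count, which is the same asymmetry that makes the reduced difference negative. The delicate point is to distribute the class-$1$ factors among the blocks, using the freedom $c > M$, so that every block difference lands within the scope of Theorem \ref{T:BG} while the leftover factors still assemble into nonnegative prefactors $F_L$ and the sum telescopes back to $\frac{1}{A_{\infty}} - \frac{1}{B_{\infty}}$. Exhibiting one consistent such choice, rather than a chain of individual inequalities, is the crux of the argument for \eqref{KR1}.
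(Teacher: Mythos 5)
Your treatment of \eqref{KR2} is exactly the paper's: factor out the common nonnegative series $\frac{1}{(q^3,q^7,q^{11};q^{12})_{\infty}}$ and apply Theorem \ref{T:BG} with $(a,b,c,M)=(4,5,8,12)$, letting $L\to\infty$. For \eqref{KR1} you have correctly diagnosed the situation (the reduced difference $\frac{1}{(q^4,q^{11};q^{12})_{\infty}}-\frac{1}{(q^7,q^8;q^{12})_{\infty}}$ fails at $q^7$, so the class-$1$ factor must be retained) and correctly named anti-telescoping as the tool, but you stop exactly where the proof begins: you never exhibit the block decomposition, and you yourself label it ``the crux.'' As written, \eqref{KR1} is not proved.

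The gap is filled more simply than your plan suggests, and in particular you do not need each local difference to be an instance of Theorem \ref{T:BG}. Set $P(j):=(q,q^4,q^{11};q^{12})_j$ and $Q(j):=(q,q^7,q^8;q^{12})_j$ (so only the common classes $5,9$ are factored out, and the class-$1$ factor is kept in \emph{both} products, which disposes of your worry about ``routing'' it to one side). The standard anti-telescoping identity gives
\begin{equation*}
\frac{1}{P(L)}-\frac{1}{Q(L)}=\sum_{j=1}^{L}\frac{1}{\frac{Q(L)}{Q(j-1)}\,P(j)}\left(\frac{Q(j)}{Q(j-1)}-\frac{P(j)}{P(j-1)}\right),
\end{equation*}
and the local difference computes to $(1-q^{12j-11})\,q^{12(j-1)}\cdot q^4(1-q^3)(1-q^4)$. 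This is \emph{not} nonnegative on its own, nor is it of the shape in Theorem \ref{T:BG}; instead, the factors $(1-q^{12j-11})$ and $(1-q^4)$ cancel against $(q^{12j-11};q^{12})_{L-j+1}$ and $(q^4;q^{12})_j$ in the denominator, and $\frac{1-q^3}{1-q}=1+q+q^2$ absorbs the remaining numerator factor into $(q;q^{12})_j$. Each term is then manifestly a nonnegative series, which proves $\frac{1}{P(L)}-\frac{1}{Q(L)}\succeq 0$ for all $L$ and hence \eqref{KR1} after multiplying by $\frac{1}{(q^5,q^9;q^{12})_{\infty}}$ and letting $L\to\infty$. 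So the decisive step is an explicit factorization-and-cancellation in each anti-telescoped term, not a consistent choice of two-parameter blocks; your proposal would need that computation (or a genuine substitute for it) to count as a proof.
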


\begin{proof}
\eqref{KR2} is an immediate consequence of Theorem \ref{T:BG}, since for every $L \geq 0$, $$\frac{1}{(q,q^8;q^{12})_L} - \frac{1}{(q^4,q^5;q^{12})_L} \succeq 0.$$  Multiplying both sides by the positive series $\frac{1}{(q^3,q^7,q^{11};q^{12})_{\infty}}$ and taking the limit $L \to \infty$ finishes the proof of \eqref{KR2}.

Andrews' Anti-telescoping Method \cite{A} works seamlessly to show \eqref{KR1}.  Define $$P(j):= (q,q^4,q^{11};q^{12})_j \qquad \text{and} \qquad Q(j):= (q,q^7,q^8;q^{12})_j,$$ and note that the following implies \eqref{KR1}: \begin{equation}\label{3.3} \frac{1}{P(L)} - \frac{1}{Q(L)} \succeq 0 \quad \text{for all $L \geq 0$.} \end{equation}  Now we write
\begin{align*} \frac{1}{P(L)}- \frac{1}{Q(L)} &= \frac{1}{Q(L)} \left(\frac{Q(L)}{P(L)} - 1 \right) \\ &= \frac{1}{Q(L)}\sum_{j=1}^L \left(\frac{Q(j)}{P(j)} - \frac{Q(j-1)}{P(j-1)} \right) \\ &= \sum_{j=1}^L \frac{1}{\frac{Q(L)}{Q(j-1)} P(j)} \left(\frac{Q(j)}{Q(j-1)} - \frac{P(j)}{P(j-1)} \right),  \end{align*} whose $j$-th term is $$\frac{(1-q^{12j-11})q^{12(j-1)}}{(q^{12j-11},q^{12j-5},q^{12j-4};q^{12})_{L-j+1}(q,q^4,q^{11};q^{12})_j} \times \left(-q^7-q^8+q^4+q^{11} \right) $$ \begin{equation}\label{jthterm}= \frac{(1-q^{12j-11})q^{12(j-1)}}{(q^{12j-11},q^{12j-5},q^{12j-4};q^{12})_{L-j+1}(q,q^4,q^{11};q^{12})_j} \times q^4(1-q^3)(1-q^4).\end{equation}  The terms $(1-q^4)$ and $(1-q^{12j-11})$ are cancelled in the denominator, and we can write $\frac{1-q^3}{1-q} = 1+q+q^2$.  Hence, \eqref{jthterm} is nonnegative for every $j$, proving \eqref{3.3} and then \eqref{KR1}.
\end{proof}

There is another pair of identities in \cite{KR18} with an Ehrenpreis Problem set-up:
\begin{align*}
&\mathcal{KR}_5: & H_6(1)&= \frac{1}{(q^2;q^4)_{\infty}} \prod_{n \geq 0} \left(1+q^{4n+1} + q^{2(4n+1)} \right), \\ &\mathcal{KR}_{5a}: & H_7(1)&= \frac{1}{(q^2;q^4)_{\infty}} \prod_{n \geq 0} \left(1+q^{4n+3} + q^{2(4n+3)} \right).
\end{align*}

But both identities were proved in \cite{BJSM}, and there is an obvious injection proving $$\frac{1}{(q^2;q^4)_{\infty}} \prod_{n \geq 0} \left(1+q^{4n+1} + q^{2(4n+1)} \right)-\frac{1}{(q^2;q^4)_{\infty}} \prod_{n \geq 0} \left(1+q^{4n+3} + q^{2(4n+3)} \right) \succeq 0,$$ namely, sending each $(4n+3)$ to the pair $(4n+1,2).$

\section{Concluding Remarks}

As we pointed out in the introduction, \eqref{E:RR} was the start of Andrews-Baxter's ``motivated'' proof of the Rogers-Ramanujan identities \cite{AB}.  They defined $G_1:= (q,q^4;q^5)_{\infty}^{-1}$ and $G_2:= (q^2,q^3;q^5)_{\infty}^{-1}$, and then recursively \begin{equation}\label{RRrec} G_i:= \frac{G_{i-2} - G_{i-1}}{q^{i-2}}, \qquad \text{for $i \geq 3$}. \end{equation}  They then observed computationally that $G_i = 1 + \sum_{n \geq i} g_{i,n}q^n \succeq 0.$  Thus, as $i \to \infty$ the coefficient of $q^n$ in $G_i$ is eventually 0.  This was their ``Empirical Hypothesis,'' and proving it leads easily to a new proof of the Rogers-Ramanujan identities.

Note that, starting from the sum-sides of $G_1$ and $G_2$, the recursive definition \eqref{RRrec} and the Empirical Hypothesis are completely natural.  For example, if $\mathcal{RR}$ denotes the set of gap-2 partitions, then by the Rogers-Ramanujan Identities, $$G_1-G_2 = \sum_{\substack{\lambda \in \mathcal{RR} \\ \lambda \ni 1}} q^{|\lambda|} = q\left(1+\sum_{\substack{\lambda \in \mathcal{RR} \\ \lambda_j \geq 3}} q^{|\lambda|} \right),$$ and so $$G_2-G_3 = \sum_{\substack{\lambda \in \mathcal{RR} \\ \lambda_j \geq 2 \\ \lambda_j \ni 2}} q^{|\lambda|} = q^2 \left(1+ \sum_{\substack{\lambda \in \mathcal{RR} \\ \lambda_j \geq 4}} q^{|\lambda|} \right),$$ and so on.

For $\mathcal{KR}_{4}$, $\mathcal{KR}_{4a}$, $\mathcal{KR}_{6}$ and $\mathcal{KR}_{6a}$, the conditions on the sum-sides are complicated, so a recurrence (or set of recurrences) like \eqref{RRrec} that result in an ``Empirical Hypothesis'' may be more difficult to find.  Nevertheless, these ideas have been further developed in \cite{CKLMQRS}, \cite{KLRS} and \cite{LZ} to give ``motivated proofs'' of sum-product identities with gap-conditions that are more complicated than those of $\mathcal{RR}$.  Perhaps further developments will give an answer to the following question: Do there exist ``motivated proofs'' of $\mathcal{KR}_{4}$, $\mathcal{KR}_{4a}$, $\mathcal{KR}_{6}$ and $\mathcal{KR}_{6a}$?  This would be especially interesting, since to our knowledge there have not yet been ``motivated proofs'' featuring asymmetric products.


\begin{thebibliography}{99}

\bibitem{A}
G. Andrews, Differences of partition functions: The anti-telescoping method. From Fourier analysis and number theory to Radon transforms and geometry, 1-20, {\it Dev. Math.}, {\bf 28}, Springer, New York, 2013.

\bibitem{AB} G. Andrews and R. Baxter, A Motivated Proof of the Rogers-Ramanujan Identities. Am. Math. Mon. {\bf 96}, 401-409, 1989.

\bibitem{AP} G. Andrews, {\it The theory of partitions,} Reprint of the 1976 original.  Cambridge Mathematical Library.  Cambridge University Press, Cambridge, 1998.

\bibitem{BG05} A. Berkovich and F. G. Garvan, Dissecting the Stanley partition function, {\it Journal of Combinatorial Number Theory Ser. A} {\bf 112} (2005), no. 2, 277-291.

\bibitem{BG15} A. Berkovich and K. Grizzell, On the Class of Dominant and Subordinate Products. {\it Mathematics} {\bf 1} (2013), no. 2, 65-75.

\bibitem{BJSM} K. Bringmann, C. Jennings-Shaffer and K. Mahlburg, Proofs and reductions of Kanade and Russell's partition identities, {\it J. Reine Angew. Math.} (to appear).

\bibitem{CKLMQRS}  B. Coulson, S. Kanade, J. Lepowsky, R. McRae, F. Qi, M. Russell, C. Sadowski, A motivated proof of the G\"{o}llnitz-Gordon-Andrews identities, Ramanujan J. (2015), \url{http://dx.doi.org/10.1007/s11139-015-9722-8}.

\bibitem{K} K. Kadell, An Injection for the Ehrenpreis Rogers-Ramanujan problem. {\it J. Combin. Theory Ser. A} {\bf 86} (1999), no. 2, 390-394.

\bibitem{KLRS}  S. Kanade, J. Lepowsky, M. Russell, A. Sills, Ghost series and a motivated proof of the Andrews-Bressoud identities, {\it J. Combin. Theory Ser. A} 146 (2017), 33-62.

\bibitem{KR14} S. Kanade and M. Russell, IdentityFinder and some new identities of Rogers-Ramanujan type, {\it Exp. Math.} {\bf 24} (2014), no. 4
	
\bibitem{KR18} S. Kanade and M. Russell, \emph{Staircases to analytic sum-sides for many new integer partition identities of Rogers-Ramanujan type}, preprint. \url{https://arxiv.org/abs/1803.02515}.

\bibitem{LZ} J. Lepowsky and M. Zhu, A motivated proof of Gordon's identities, Ramanujan J. 29 (2012), 199-211.

\bibitem{M} J. McLaughlin, Refinements of some partition inequalities, {\it Integers}, {\bf \# A66} (2016)

\end{thebibliography}
\end{document}